 \documentclass{conm-p-l}
\usepackage{amssymb}

\newtheorem{theorem}{Theorem}[section]

\theoremstyle{definition}
\newtheorem{definition}[theorem]{Definition}

\theoremstyle{remark}
\newtheorem{remark}[theorem]{Remark}
\newtheorem{coro}[theorem]{Corollary}

\numberwithin{equation}{section}

\begin{document}

\title[Sparse Operators and their boundedness on Morrey-type Spaces]{Sparse Operators and their boundedness on Morrey-type Spaces: An Expository Note}
 
\author{Manasa N. Vempati}
\address{Department of Mathematics\\ Louisiana State University\\ Baton Rouge \\ LA 70803 \\ USA}
\email{nvempati@lsu.edu}
 \subjclass{47B10, 42B20, 43A85}

\date{}
\dedicatory{This paper is dedicated in honor of Professor Boris Rubin's 80th birthday.}
\keywords{Sparse operators, Weights, Morrey spaces, $L^p$ spaces}
\begin{abstract}
Sparse domination is a central tool in modern harmonic analysis, offering a unified approach to weighted inequalities for Calder\'on--Zygmund operators and related operators such as commutator operators, rough singular integrals, square functions etc. In this expository note, we briefly survey the main ideas behind sparse bounds on $L^p$ and weighted $L^p$ spaces, and discuss boundedness results for sparse operators on Morrey and generalized Morrey spaces. We establish a model result on the boundedness of sparse operators on Komori--Shirai type weighted Morrey spaces $L^{p,\kappa}(w)$, $w\in A_p$. These bounds offer a simpler framework for deriving Morrey-space estimates 
for Calder\'on--Zygmund operators and related operators via sparse domination. We conclude with remarks about extensions and connections to other Morrey space settings.
\end{abstract}

\maketitle

\section{Introduction\label{Section:Introduction}}

The classical Morrey spaces were introduced by Morrey in 1938 in his study of quasi-linear elliptic equations~\cite{Morrey}. For $1 \le p < \infty$ and $0 \le \lambda \le n$, the Morrey space $L^{p,\lambda}(\mathbb{R}^{n})$ consists of all locally integrable functions $f$ such that
\[
\|f\|_{L^{p,\lambda}(\mathbb{R}^n)}
:= \sup_{B(x,r)} r^{-\lambda/p}
\bigg( \int_{B(x,r)} |f(y)|^{p} \, dy \bigg)^{1/p} < \infty,
\]
where the supremum is taken over all balls $B(x,r)\subset \mathbb{R}^{n}$. 

Morrey spaces generalize the classical Lebesgue spaces and exhibit finer analytic properties than these spaces, by incorporating a more delicate control on the local behaviour of functions. In particular, for $\lambda=0$ and $\lambda=n$, the Morrey spaces $L^{p,0}(\mathbb{R}^{n})$ and $L^{p,n}(\mathbb{R}^{n})$ coincide with the Lebesgue spaces $L^{p}(\mathbb{R}^{n})$ and $L^{\infty}(\mathbb{R}^{n})$, respectively. Thus Morrey spaces extend the classical $L^{p}$ spaces while offering finer control on local behaviour, which makes them suitable for capturing regularity phenomena in PDE and harmonic analysis. Moreover, Morrey spaces form complete normed spaces and admit several equivalent formulations. 

Since their introduction, many results originally established for Lebesgue spaces have been extended to Morrey spaces. Adams~\cite{Adams} proved the boundedness of Riesz potentials on Morrey spaces, and Chiarenza-Frasca~\cite{FM} later showed that the Hardy-Littlewood maximal operator is bounded on $L^{p,\lambda}(\mathbb{R}^{n})$ for $1<p<\infty$ and $0<\lambda<n$. It was subsequently established that the CalderA3n-Zygmund operators act boundedly on $L^{p,\lambda}(\mathbb{R}^{n})$ for $0 \le \lambda < n$. In addition, Olsen~\cite{Olsen} proved a HAlder-type inequality in Morrey spaces: if $f \in L^{p,\lambda}(\mathbb{R}^n)$ and $g \in L^{q,\mu}(\mathbb{R}^n)$ with $1 \le p, q < \infty$, $\frac{1}{p} + \frac{1}{q} \ge 1$, and
\[
\frac{1}{r} = \frac{1}{p} + \frac{1}{q},
\qquad 
\frac{\nu}{r} = \frac{\lambda}{p} + \frac{\mu}{q},
\]
then
\[
\|fg\|_{L^{r,\nu}(\mathbb{R}^{n})} 
\le \|f\|_{L^{p,\lambda}(\mathbb{R}^{n})}
   \|g\|_{L^{q,\mu}(\mathbb{R}^{n})}.
\]

Weighted Morrey spaces further refine this framework by simultaneously encoding local behavior and non-uniform density through weights, thereby generalizing both unweighted Morrey spaces and weighted Lebesgue spaces $L^{p}(w)$. This makes them particularly natural for studying PDEs, as well as for the study of operators whose behavior depends on both the underlying geometry and local integrability. In view of the rapidly developing weighted theory on $L^{p}(w)$ spaces, it is therefore natural to investigate the boundedness of fundamental operators, such as the Hardy-Littlewood maximal operator, Calder\'on-Zygmund singular integrals, fractional integrals, and their commutators on weighted Morrey spaces and their generalizations. Representative results in this direction can be found in the literature on weighted and generalized Morrey spaces~\cite{gvqx,jm, akl, rjem, vsg} and in more recent developments such as~\cite{mnv}.

At the same time note that, the last decade has seen the rise of sparse domination as a powerful framework for singular integrals and related operators. The basic idea, going back to work of Lerner and subsequent developments~\cite{LernerLocal,LernerA2,LernerOmbrosi,Barron,LaceySparse,BaileyEtAl}, is that a large class of linear and multilinear operators can be dominated pointwise by positive sparse operators built from averages over a carefully chosen collection of cubes. This allows one to deduce sharp weighted estimates, endpoint bounds, and quantitative dependence on Muckenhoupt characteristics in a conceptually unified way.

A vast part of the sparse literature has focused on $L^p$ and weighted $L^p(w)$ spaces. By now there are very refined results, for example sharp $A_p$ and $A_{p,q}$ bounds for Calder\'on--Zygmund and fractional integral operators; see for example \cite{LernerA2,LernerOmbrosi,Barron,LaceySparse}. On the other hand, Morrey spaces and their weighted/generalized variants originally studied by Komori and Shirai~\cite{KomoriShirai} and many others \cite{AlmeidaSamko2009, Nakai1994} have received less attention from the perspective of sparse operators. 

\medskip

The purpose of this note is twofold:
\begin{itemize}
  \item To give a concise overview of sparse bounds on $L^p$ and $L^p(w)$ spaces along with a brief literature guide;
  \item To present a self-contained proof of a model result: the boundedness of weighted sparse operators on the Komori--Shirai weighted Morrey spaces $L^{p,\kappa}(w)$ with $w \in A_p$ and $0 \le \kappa < 1$.
\end{itemize}
This shows how the sparse framework naturally extends to Morrey-type norms and provides a blueprint for further generalizations such as fractional operators, generalized Morrey, spaces of homogeneous type, Schr\"odinger settings, and so on. We remark that there are several classical results that already establish the boundedness of Calder\'on-Zygmund operators, fractional integrals, and related operators on weighted Morrey spaces; see, for example, the work of Komori-Shirai \cite{KomoriShirai} and its various extensions \cite{GuliyevOM, SawanoOM}. These results, however, rely on traditional tools such as sharp maximal functions, covering lemmas, and Hedberg-type inequalities, and do not make use of sparse domination techniques. In contrast, our aim in this note is to present an alternative approach based on sparse bounds. By working directly with sparse averaging operators and their weighted estimates, we obtain a simpler proof to Morrey-type inequalities. This approach not only simplifies several arguments, but also lays the groundwork for incorporating sparse methods into the weighted Morrey framework.

\medskip

In this note, we will work throughout on $\mathbb{R}^n$ equipped with Lebesgue measure and the constants can differ from line to line typically depending on $n$ and weight constants. Also note that, in this article we assume $w\in A_p$, which is not always necessary for the boundedness of operators on weighted Morrey spaces. In particular, for certain fractional-type operators, boundedness can be obtained under weaker conditions on the weight; see, for example, \cite{NakamuraSawanoTanaka2018}. However, in the present paper, we restrict ourselves to the $A_p$ framework, which is natural in the context of sparse domination and sufficient for the applications considered here.


\section{Sparse operators on $L^p$ and weighted $L^p$ Spaces}

\subsection{Sparse families and model operators}

We will now briefly recall the standard definitions. Let $\mathcal{D}$ denote the dyadic cubes in $\mathbb{R}^n$, i.e.
\[
  \mathcal{D} := \big\{ 2^{-k}([0,1)^n + m) : k \in \mathbb{Z},\ m \in \mathbb{Z}^n \big\}.
\]

\begin{definition}
Let $0 < \eta \le 1$. A collection $\mathcal{S} \subset \mathcal{D}$ is called \emph{$\eta$-sparse} if for every $Q \in \mathcal{S}$ there exists a measurable set $E_Q \subset Q$ such that $|E_Q| \ge \eta |Q|$ and the sets $(E_Q)_{Q\in\mathcal{S}}$ are pairwise disjoint.

\end{definition}

Note that we have that Sparse families satisfy a Carleson packing condition: for any cube $R$,
\[
  \sum_{\substack{Q\in\mathcal{S}\\Q\subset R}} |Q| \lesssim |R|,
\]
with constants depending only on $\eta$ and $n$.  

Now let us introduce $r$-sparse operator associated to a sparse family.

\begin{definition}
Let $1 \le r < \infty$ and $\mathcal{S}$ be a sparse family. The associated $r$-sparse operator is
\[
  \mathcal{A}_{\mathcal{S}}^{(r)}f(x)
  := \sum_{Q\in\mathcal{S}} \bigg( \frac{1}{|Q|}\int_Q |f(y)|^r\,dy \bigg)^{1/r} \mathbf{1}_Q(x).
\]
\end{definition}

For many applications, the case $r=1$ suffices. One can also define weighted versions with averages against a weight $w$:
\[
  \mathcal{A}_{\mathcal{S},w}^{(r)}f(x)
  := \sum_{Q\in\mathcal{S}} \bigg( \frac{1}{w(Q)}\int_Q |f(y)|^r w(y)\,dy \bigg)^{1/r} \mathbf{1}_Q(x).
\]


A principal motivation for the study of sparse operators is the following kind of pointwise control, that is, given a Calder\'on--Zygmund operator $T$, there exist finitely many sparse families $\mathcal{S}_1,\dots,\mathcal{S}_N$ such that
\begin{equation}\label{eq:sparse_dom}
  |Tf(x)| \lesssim \sum_{j=1}^N \mathcal{A}^{(1)}_{\mathcal{S}_j} f(x)
  \quad\text{for a.e. }x\in\mathbb{R}^n
\end{equation}
for all compactly supported $f$. 

This type of domination was first obtained in the work of Lerner~\cite{LernerLocal,LernerA2}, and then extended and refined by many authors; see, for instance, Lerner--Ombrosi~\cite{LernerOmbrosi}, Lacey~\cite{LaceySparse}, Barron~\cite{Barron}, and more recent works such as Bailey-Bennett-Bernicot et al.~\cite{BaileyEtAl}. There are also several variants that exist now for for rough singular integrals, oscillatory integrals, square functions, variation norms, and multilinear operators \cite{CondeAlonsoRey, HytTalOscillatory, BernicotFreyPetermichl, LernerVariation, BernicotBernicot}.


Let us recall how sparse operators behave on weighted Lebesgue spaces. For this we define Muckenhoupt class $A_p$ below;

\subsection{Muckenhoupt $A_p$ Weights} Let $\omega(x)$ be a nonnegative locally integrable function
  on~$\mathbb{R}^n$. For $1 < p < \infty$, we
  say $\omega$ is an $A_p(X)$ \emph{weight}, written $\omega\in
  A_p$, if
  \begin{equation*}
    [\omega]_{A_p}
    := \sup_B \left(\frac{1}{|B|}\int_B \omega(x)dx\right)
    \left(\frac{1}{|B|}\int_B \omega(x)^{1-p'}dx\right)^{p-1}
    < \infty.
  \end{equation*}
  Here the suprema are taken over all balls~$B\subset X$.
  The quantity $[\omega]_{A_p}$ is called the \emph{$A_p$~constant
  of~$\omega$}.

\vspace{0.2 cm}
For $p = 1$, we say $\omega$ is an $A_1(X)$ \emph{weight},
  written $\omega\in A_1$, if $M(\omega)(x)\leq \omega(x)$ for $\mu$-almost every $x\in X$ and we define $A_{\infty}(X) = \cup_{1<p<\infty} A_p(X)$.

\vspace{0.2 cm}

We now define $A_{p,q}(X)$ weights (see, \cite{KomoriShirai}). We say a weight function $\omega$ belongs to $A_{p,q}$, for $1<p<q<\infty,$ if there exists a constant $C>1$ such that

 \begin{equation*}
    [\omega]_{A_{p,q}}
    := \sup_B \left(\frac{1}{|B|}\int_B \omega(x)^q d\mu(x)\right)^{1/q}
    \left(\frac{1}{|B|}\int_B \omega(x)^{-p'}dx\right)^{1/p'}
    < \infty
  \end{equation*}
  where $1/p+1/p'=1$.

  When $p=1$, $\omega\in A_{1,q}$ with $1<q<\infty$ if there exists $C>1$ such that 

  \begin{equation*}
    [\omega]_{A_{p,q}}
    := \sup_B \left(\frac{1}{|B|}\int_B \omega(x)^q dx\right)
    \left(\operatorname{esssup}_{x\in B}\frac{1}{\omega(x)}\right)^{1/p'}
    < \infty
  \end{equation*}

It is well known that $A_p$ weights are doubling and satisfy various reverse H\"older inequalities.

We know the following boundedness result for the sparse operators;

\begin{theorem}\label{thm:sparseLp}
Let $1<p<\infty$ and $w\in A_p$. For any sparse family $\mathcal{S}$,
\[
  \|\mathcal{A}^{(1)}_{\mathcal{S}} f\|_{L^p(w)}
  \lesssim [w]_{A_p}^{\max\{1,\frac{1}{p-1}\}} \|f\|_{L^p(w)} \quad\text{for all }f\in L^p(w).
\]
\end{theorem}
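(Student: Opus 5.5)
We prove the sharp weighted bound for $\mathcal{A}:=\mathcal{A}^{(1)}_{\mathcal{S}}$ by duality against $L^{p'}(\sigma)$, where $\sigma:=w^{1-p'}$ is the dual weight. This reduces the statement to a bilinear estimate. First we treat the case $p\ge 2$, where the target exponent is $\max\{1,\frac1{p-1}\}=1$. The case $1<p<2$ then follows by duality. Indeed, $\mathcal{A}$ is self-adjoint with respect to Lebesgue measure, since $\int \mathcal{A}f\cdot g=\sum_{Q}\langle f\rangle_Q\langle g\rangle_Q|Q|$. Hence
\[
\|\mathcal{A}\|_{L^p(w)\to L^p(w)}=\|\mathcal{A}\|_{L^{p'}(\sigma)\to L^{p'}(\sigma)},
\qquad [\sigma]_{A_{p'}}=[w]_{A_p}^{\frac1{p-1}},
\]
which produces exactly the exponent $\frac1{p-1}$ when $p<2$.

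\textbf{The case $p\ge2$.} Write $f=h\sigma$ with $h\in L^p(\sigma)$, and take $g\in L^{p'}(w)$, so that $\|f\|_{L^p(w)}=\|h\|_{L^p(\sigma)}$. We must show
\[
\sum_{Q\in\mathcal{S}}\langle h\sigma\rangle_Q\langle gw\rangle_Q|Q|\lesssim [w]_{A_p}\|h\|_{L^p(\sigma)}\|g\|_{L^{p'}(w)}.
\]
Rewrite each term in weighted averages:
\[
\langle h\sigma\rangle_Q\langle gw\rangle_Q|Q|
=\frac{w(Q)\sigma(Q)^{p-1}}{|Q|^p}\cdot\frac{|Q|^{p-1}}{\sigma(Q)^{p-2}\,w(Q)}\;\langle h\rangle^\sigma_Q\langle g\rangle^w_Q\,\sigma(Q).
\]
The first factor is at most $[w]_{A_p}$. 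For the second, use $p-2\ge0$ and $|Q|\le \frac1\eta|E_Q|$ together with Hölder:
\[
|E_Q|\le \sigma(E_Q)^{1/p}\,w(E_Q)^{1/p'},
\]
since $\sigma^{1/p}w^{1/p'}=1$. Raising this to the power $p-1$ should give
\[
\frac{|Q|^{p-1}}{\sigma(Q)^{p-2}w(Q)}\,\sigma(Q)\lesssim \sigma(E_Q)^{1/p}w(E_Q)^{1/p'},
\]
after using $\sigma(E_Q)\le\sigma(Q)$ and $w(E_Q)\le w(Q)$ with the correct signs of the exponents. Checking these powers is the delicate bookkeeping step, and it is exactly where $p\ge2$ is used. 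Granting it, the sum is controlled by
\[
[w]_{A_p}\sum_{Q}\Big(\inf_{E_Q}M^\sigma_{\mathcal{D}}h\Big)\sigma(E_Q)^{1/p}\Big(\inf_{E_Q}M^w_{\mathcal{D}}g\Big)w(E_Q)^{1/p'}.
\]
Apply Hölder in the sum over $Q$ and use the disjointness of the sets $E_Q$. This bounds the expression by
\[
[w]_{A_p}\,\|M^\sigma_{\mathcal{D}}h\|_{L^p(\sigma)}\,\|M^w_{\mathcal{D}}g\|_{L^{p'}(w)}.
\]

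\textbf{Closing the estimate.} The weighted dyadic maximal operators $M^\mu_{\mathcal{D}}$ are bounded on $L^s(\mu)$ with norm at most $s'$, independently of the weight. This follows from Doob's inequality, because dyadic cubes form a martingale filtration for any locally finite measure. This closes the estimate with the constant $[w]_{A_p}$.

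\textbf{Main obstacle.} The hard step is the exponent bookkeeping that converts $|Q|^{p-1}/(\sigma(Q)^{p-2}w(Q))$ into the products of $E_Q$-measures. If the direct Hölder trick does not close, the fallback is the standard stopping-time or principal-cubes argument of Lacey--Moen--Pérez--Torres. That argument shows instead the two-weight testing-type bound $\sum_Q \sigma(Q)^{p-1}w(Q)\,|Q|^{-p}\ldots$, using the Carleson packing condition stated earlier together with the Carleson embedding theorem in $L^p(\sigma)$, and it yields the same linear dependence on $[w]_{A_p}$.
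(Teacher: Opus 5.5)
Your overall route is sound. The bilinear form $\sum_Q\langle f\rangle_Q\langle g\rangle_Q|Q|$ is symmetric, $[\sigma]_{A_{p'}}=[w]_{A_p}^{1/(p-1)}$, and Hölder over the disjoint sets $E_Q$ followed by Doob's inequality for $M^\sigma_{\mathcal D}$ and $M^w_{\mathcal D}$ is the right endgame. This is essentially Moen's direct argument; the paper itself gives no proof of this theorem and only cites the literature.

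However, there is a genuine gap. The step you leave as ``granting it'' is the whole content of the case $p\ge2$, and as written it is false. First, your factorization does not reproduce the summand: the product of your non-average factors is $\sigma(Q)^2/|Q|$, not $\sigma(Q)w(Q)/|Q|$. Second, $\sigma^{1/p}w^{1/p'}=w^{(p-2)/(p-1)}\neq1$ for $p\neq2$. The correct identity is $\sigma^{1/p'}w^{1/p}=1$, so Hölder gives $|E_Q|\le\sigma(E_Q)^{1/p'}w(E_Q)^{1/p}$, with the exponents the other way round. Consequently the inequality you ask to be granted cannot hold with any constant depending only on $n,p,\eta,[w]_{A_p}$. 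Replacing $w$ by $cw$ leaves $[w]_{A_p}$ unchanged and multiplies $\sigma$ by $c^{-1/(p-1)}$. It then multiplies your left side by $c^{-2/(p-1)}$ but your right side by $c^{(p-2)/(p-1)}$, so their ratio grows like $c^{-p'}$ as $c\to0$.

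The repair is the bookkeeping you anticipated, done correctly. Write
\[
\langle h\sigma\rangle_Q\langle gw\rangle_Q|Q|=\frac{w(Q)\sigma(Q)^{p-1}}{|Q|^p}\cdot|Q|^{p-1}\sigma(Q)^{2-p}\cdot\langle h\rangle^\sigma_Q\langle g\rangle^w_Q .
\]
The first factor is at most $C_{n,p}[w]_{A_p}$; the dimensional constant appears because the paper's $A_p$ constant is taken over balls. For $p\ge2$ you have $\sigma(Q)^{2-p}\le\sigma(E_Q)^{2-p}$. By the corrected Hölder inequality, $|Q|^{p-1}\le\eta^{1-p}|E_Q|^{p-1}\le\eta^{1-p}\sigma(E_Q)^{(p-1)^2/p}w(E_Q)^{(p-1)/p}$. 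Since $\frac{(p-1)^2}{p}+2-p=\frac1p$, this gives
\[
|Q|^{p-1}\sigma(Q)^{2-p}\le\eta^{1-p}\,\sigma(E_Q)^{1/p}\,w(E_Q)^{1/p'} .
\]
This is exactly the form your Hölder-over-$Q$ and Doob step requires, and it is the only place where $p\ge2$ enters. With this correction the rest of your argument goes through and yields $\|\mathcal{A}^{(1)}_{\mathcal{S}}\|_{L^p(w)\to L^p(w)}\le C(n,p,\eta)\,[w]_{A_p}$ for $p\ge2$. Your duality step then supplies the exponent $\frac1{p-1}$ for $1<p<2$, so the stopping-time fallback is not needed.
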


This is now standard; see, for example, Lerner~\cite{LernerA2}, Lacey~\cite{LaceySparse}, Lerner--Ombrosi~\cite{LernerOmbrosi}, and Barron~\cite{Barron}. In particular, combining \eqref{eq:sparse_dom} with Theorem~\ref{thm:sparseLp}, one obtains sharp weighted bounds for Calder\'on--Zygmund operators:
\[
  \|Tf\|_{L^p(w)} \lesssim [w]_{A_p}^{\max\{1,\frac{1}{p-1}\}} \|f\|_{L^p(w)}.
\]

Analogous results hold for fractional sparse operators and lead to sharp $A_{p,q}$ bounds for fractional integrals; see, for instance, works of Lacey and others on sparse bounds for fractional integrals and oscillatory operators~\cite{LaceySparse,BaileyEtAl}.


\subsection{Weighted Morrey spaces and its variants}

We again recall classical Morrey spaces, this has another equivalent formulations. For instance, fix $1\le p<\infty$ and $0\le\lambda\le n$, the Morrey space $\mathcal{M}^{p,\lambda}(\mathbb{R}^n)$ an equivalent formulation of classical Morrey spaces in terms of the volume of balls. This formulation is standard in harmonic analysis and will be convenient for our arguments. The Morrey space $\mathcal{M}^{p,\lambda}(\mathbb{R}^n)$
consists of all locally integrable functions $f$ such that
\[
  \|f\|_{\mathcal{M}^{p,\lambda}}
  := \sup_{B} |B|^{\frac{\lambda-n}{np}}
    \bigg(\int_B |f(x)|^p\,dx\bigg)^{1/p} < \infty,
\]
where the supremum is taken over all balls $B\subset\mathbb{R}^n$.
 The definition of $M^{p,\lambda}(\mathbb{R}^n)$ given above is equivalent to the classical Morrey space $L^{p,\mu}(\mathbb{R}^n)$ defined in terms of the radius of balls, that is using $|B(x,r)| \sim r^n$, the two norms are equivalent upon identifying $\mu = n - \lambda$. 

Komori and Shirai introduced a weighted counterpart for the classical Morrey space in~\cite{KomoriShirai}. For $w$ be a weight and $0\le\kappa<1$, we will now define the weighted Morrey space below;

\begin{definition}
For $1\le p<\infty$ and $0\le\kappa<1$, the weighted Morrey space $L^{p,\kappa}(w)$ consists of all locally integrable $f$ such that
\[
  \|f\|_{L^{p,\kappa}(w)}
  := \sup_{B} \bigg( \frac{1}{w(B)^\kappa}\int_B |f(x)|^p w(x)\,dx\bigg)^{1/p} < \infty,
\]
where $w(B) := \int_B w(x)\,dx$ and the supremum is over all balls $B\subset\mathbb{R}^n$.
\end{definition}

Observe for $\kappa=0$, we recover the weighted Lebesgue space $L^p(w)$, and for $0<\kappa<1$ these are strictly intermediate spaces. Komori and Shirai studied the boundedness of classical operators Hardy-Littlewood maximal, Calder\'on-Zygmund operators, fractional integrals, etc., on these spaces, assuming $w\in A_p$ and related weight classes such as $A_{p,q}$~\cite{KomoriShirai}. There are several other variants of Morrey spaces that have been studied such as weighted local Morrey spaces~\cite{NakamuraSawano}, generalized Morrey spaces~\cite{GunawanInclusion,SawanoNote}, and there have been results concerning some classical operator bounds in those settings~\cite{GunawanBessel,WangZheng,ChenDingWang,TanWangXue,Ramadana}.


While sparse domination is commonplace in the $L^p(w)$ setting, the literature on explicit sparse operator bounds on Morrey-type spaces is comparatively sparse. Many results study operators Calder\'on--Zygmund, fractional integrals, Schr\"odinger-type operators, commutators on Morrey or generalized Morrey spaces, sometimes with weights, using classical techniques such as dyadic decompositions, Hedberg-type inequalities, and covering arguments; see, e.g., Gunawan et al.~\cite{GunawanBessel}, Tanaka~\cite{Tanaka}, and works surveyed in~\cite{SawanoBook,SawanoNote}. The recent papers on generalized (weighted) Morrey and Morrey-Campanato spaces discuss sparse domination implicitly in the analysis of operator bounds; see, for example, Ramadana et al.~\cite{Ramadana} and Tan-Wang-Xue~\cite{TanWangXue}. However, to the best of the author's knowledge, there is no systematic, self-contained treatment of the model sparse averaging operators on the Komori-Shirai spaces $L^{p,\kappa}(w)$ or its various extensions for general $A_p$ weights, stated and proved in a general form. 

In the next section we give such a result with a detailed proof. The argument is elementary once one combines the standard $L^p(w)$ boundedness of sparse operators with the structure of the Morrey norm and the doubling properties of $A_p$ weights.

\section{Sparse boundedness on weighted Morrey spaces}

We now formulate and prove the main model result.

Let $1<p<\infty$ and $w\in A_p$. Given a sparse family $\mathcal{S}$, we consider the weighted sparse operator
\begin{equation}\label{eq:weighted_sparse_def}
  \mathcal{A}_{\mathcal{S}} f(x)
  := \sum_{Q\in\mathcal{S}} \bigg( \frac{1}{w(Q)}\int_Q |f(y)|^p w(y)\,dy \bigg)^{1/p} \mathbf{1}_Q(x).
\end{equation}
Note that the averaging exponent is $p$, which is adapted to the underlying $L^p(w)$ and $L^{p,\kappa}(w)$ scales. Also note that, when $w\in A_p$, then $L^{p,\kappa}(w)\subset L^1_{\mathrm{loc}}$, and hence $\mathcal{A}_{\mathcal{S}}f$ is well
defined for $f\in L^{p,\kappa}(w)$ by truncation, as in \cite{KomoriShirai} and \cite{SawanoBook}.

\begin{theorem}\label{thm:sparseMorrey}
Let $1<p<\infty$, $0\le\kappa<1$, and $w\in A_p$. For any sparse family $\mathcal{S}$, the operator $\mathcal{A}_{\mathcal{S}}$ defined in~\eqref{eq:weighted_sparse_def} is bounded on the weighted Morrey space $L^{p,\kappa}(w)$; that is, there exists a constant $C>0$ (depending on $n,p,\kappa$ and $[w]_{A_p}$) such that
\[
  \|\mathcal{A}_{\mathcal{S}} f\|_{L^{p,\kappa}(w)}
  \le C\, \|f\|_{L^{p,\kappa}(w)}
  \quad\text{for all } f\in L^{p,\kappa}(w).
\]
\end{theorem}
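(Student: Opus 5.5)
The plan is the usual local/nonlocal splitting of the Morrey norm, with the sparse structure used to control the tails. Fix a ball $B$ and write $f=f_1+f_2$, where $f_1=f\mathbf{1}_{3B}$ and $f_2=f\mathbf{1}_{(3B)^c}$. Since $\mathcal{A}_{\mathcal{S}}$ is sublinear and positive, $\mathcal{A}_{\mathcal{S}}f\le \mathcal{A}_{\mathcal{S}}f_1+\mathcal{A}_{\mathcal{S}}f_2$. It then suffices to show that both
\[
\frac{1}{w(B)^\kappa}\int_B |\mathcal{A}_{\mathcal{S}}f_1|^p w \quad\text{and}\quad \frac{1}{w(B)^\kappa}\int_B |\mathcal{A}_{\mathcal{S}}f_2|^p w
\]
are at most $C\|f\|_{L^{p,\kappa}(w)}^p$, uniformly in $B$.

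\emph{Nonlocal term.} If $Q\in\mathcal{S}$ meets both $B$ and $(3B)^c$, then $\ell(Q)\gtrsim r_B$, so $B\subset cQ$ for a dimensional constant $c$. Fix $x\in B$. For each dyadic generation there is at most one cube of $\mathcal{S}$ containing $x$. Hence the relevant cubes form a chain $Q_0\subsetneq Q_1\subsetneq\cdots$ with $\ell(Q_k)\ge 2^k\ell(Q_0)\gtrsim 2^k r_B$. For each such cube, covering $Q_k$ by a ball $B_k$ of comparable size and using the doubling of $w$ gives
\[
\Big(\frac{1}{w(Q_k)}\int_{Q_k}|f|^pw\Big)^{1/p}\le \Big(\frac{w(B_k)^\kappa}{w(Q_k)}\Big)^{1/p}\|f\|_{L^{p,\kappa}(w)}\lesssim w(Q_k)^{(\kappa-1)/p}\|f\|_{L^{p,\kappa}(w)}.
\]
Since $w\in A_p$, $w$ is reverse doubling: $w(Q_k)\ge (1+\delta)^k w(Q_0)\gtrsim(1+\delta)^k w(B)$. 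Because $\kappa<1$, the series is geometric and
\[
\mathcal{A}_{\mathcal{S}}f_2(x)\lesssim w(B)^{(\kappa-1)/p}\|f\|_{L^{p,\kappa}(w)}.
\]
Integrating over $B$ against $w$ gives $\int_B|\mathcal{A}_{\mathcal{S}}f_2|^pw\lesssim w(B)^\kappa\|f\|^p_{L^{p,\kappa}(w)}$, which is the required bound.

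\emph{Local term.} Cubes meeting $B$ that are large compared with $B$ are handled exactly as above, since the tail estimate only used the size of the cube, and $f_1\le|f|$. What remains is the sum over $Q\in\mathcal{S}$ with $Q\subset 3B$. For these I use the sparseness relative to $w$. Since $w\in A_\infty$, $w(E_Q)\ge c\,w(Q)$, so $\mathcal{S}$ is sparse in the measure $w\,dx$ and satisfies the Carleson packing $\sum_{Q\subset R,\,Q\in\mathcal{S}}w(Q)\lesssim w(R)$. I then bound $\int_B|\sum_{Q\subset 3B}\langle |f_1|^p\rangle_{Q,w}^{1/p}\mathbf{1}_Q|^pw$ by dualizing against $g\in L^{p'}(w)$ supported in $B$. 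This reduces it to
\[
\sum_{Q}\langle |f_1|^p\rangle^{1/p}_{Q,w}\langle g\rangle_{Q,w}\,w(Q)\lesssim\sum_Q\langle |f_1|^p\rangle^{1/p}_{Q,w}\langle g\rangle_{Q,w}\,w(E_Q),
\]
which is at most $\int (M^{\mathcal D}_w|f_1|^p)^{1/p}\,M^{\mathcal D}_w g\;w$. The aim is then to close with the $L^{p'}(w)$ bound for the weighted dyadic maximal function applied to $g$, together with the packing estimate at scale $3B$, obtaining $\lesssim (\int_{3B}|f|^pw)^{1/p}\|g\|_{L^{p'}(w)}$. Combined with $\int_{3B}|f|^pw\le w(3B)^\kappa\|f\|^p\lesssim w(B)^\kappa\|f\|^p$, this finishes the proof.

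The main obstacle is this local step. The factor $(M^{\mathcal D}_w|f|^p)^{1/p}$ is only weak-type on $L^p(w)$, so the pairing does not close directly. Here I would first try to exploit the localization to $3B$ more carefully. One option is a stopping-time (principal cubes) argument in the measure $w$ applied to $|f_1|^p$: group the cubes under stopping cubes $P$ on which $\langle |f_1|^p\rangle_{Q,w}\le 2\langle|f_1|^p\rangle_{P,w}$, and use the $w$-Carleson packing to sum over each group. If a logarithmic loss appears, I would fall back on absorbing it into the Morrey factor $w(B)^\kappa$, using the decay in $\kappa<1$ as in the nonlocal term.
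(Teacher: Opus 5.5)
Your nonlocal estimate is correct, and it is more careful than the paper's. The paper infers an upper bound for $w(2^{k+1}B_0)^{\kappa-1}$ from $w(2^{k+1}B_0)\le C2^{kn\delta}w(B_0)$, which goes the wrong way for a negative exponent; the reverse doubling along the dyadic chain that you use is what is needed. The genuine gap is the local step, which you leave open, and it cannot be closed. The bound you aim for, $\int_B(\sum_{Q\in\mathcal S,\,Q\subset 3B}\langle |f|^p\rangle_{Q,w}^{1/p}\mathbf 1_Q)^p\,w\lesssim\int_{3B}|f|^p w$, is false. So is the $L^p(w)$-boundedness of $\mathcal A_{\mathcal S}$ that the paper invokes for $I_1$: Theorem~\ref{thm:sparseLp} concerns $\mathcal A^{(1)}_{\mathcal S}$, not averages of $|f|^p$. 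Take $n=1$, $w\equiv 1$, the $\frac12$-sparse family $\mathcal S=\{[0,2^k)\}_{k\ge 0}$ and $f=\mathbf 1_{[0,1)}$. Then $\mathcal A_{\mathcal S}f\ge 2^{-j/p}$ on $[2^{j-1},2^j)$, so $\|\mathcal A_{\mathcal S}f\|_{L^p}^p\ge\sum_{j\ge 1}\frac12=\infty$. Keeping only the cubes $[0,2^{-k})$, $0\le k\le N$, and taking $f=\mathbf 1_{[0,2^{-N})}$ gives $\int_0^1(\mathcal A_{\mathcal S}f)^p\ge\frac{N}{2}\int|f|^p$, which defeats your localized version. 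In particular the statement already fails for $\kappa=0$, where $L^{p,0}(w)=L^p(w)$. This is exactly the weak-type obstruction you spotted: for suitable $\mathcal S$, $\mathcal A_{\mathcal S}f$ dominates $(M^{\mathcal D}_w|f|^p)^{1/p}$.

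Nor can the logarithmic loss be absorbed into $w(B)^\kappa$ when $\kappa>0$, because $f$ can saturate the Morrey condition at all scales at once. Take $n=1$, $w\equiv1$, $\kappa=\frac12$. Let $K_0=[0,1)$, form $K_k$ by keeping two of the four dyadic subintervals of length $4^{-k}$ in each component of $K_{k-1}$, and set $F_N=2^N\mathbf 1_{K_N}$, $f_N=F_N^{1/p}$. Each component $J$ of $K_k$, $k\le N$, has $\int_J F_N=2^{-k}=|J|^{1/2}$. An interval of length in $[4^{-k},4^{1-k})$ meets at most two components of $K_{k-1}$, so $\int_I F_N\le 4|I|^{1/2}$ for every interval $I$ (if $|I|\ge1$ or $|I|<4^{-N}$ use $\int F_N=1$, resp. $F_N\le 2^N$). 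Hence $\|f_N\|_{L^{p,1/2}}^p\le 4$. Let $\mathcal S_N$ be the principal cubes of $F_N$ in $[0,1)$: the $\mathcal S_N$-children of $Q$ are the maximal dyadic $Q'\subsetneq Q$ with $\langle F_N\rangle_{Q'}>2\langle F_N\rangle_Q$. This family is $\frac12$-sparse. A dyadic $R\ni x$ with $R\subseteq[0,1)$ lies in no child of the smallest $Q\in\mathcal S_N$ containing it, so $\langle F_N\rangle_R\le 2\langle F_N\rangle_Q\le 2(\mathcal A_{\mathcal S_N}f_N(x))^p$. Apply this with $R=J$ a component of $K_{k-1}$, $1\le k\le N$: it gives $(\mathcal A_{\mathcal S_N}f_N)^p\ge 2^{k-2}$ on the two discarded subintervals of $J$. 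Over all such $J$ these have total measure $2^{-k}$, and they are disjoint for different $k$. Therefore $\int_0^1(\mathcal A_{\mathcal S_N}f_N)^p\ge N/4$ and $\|\mathcal A_{\mathcal S_N}f_N\|_{L^{p,1/2}}\ge (N/16)^{1/p}\|f_N\|_{L^{p,1/2}}$, with the sparseness constant fixed. Placing translated copies in disjoint unit intervals even yields one sparse family for which $\mathcal A_{\mathcal S}$ is unbounded.

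So no stopping-time refinement will finish your local step: the theorem as stated is false, and the paper's proof breaks precisely at $I_1$. What your skeleton does prove is the version with averaging exponent $r<p$. For $\mathcal A^{(r)}_{\mathcal S,w}$, your duality argument closes because $M^{\mathcal D}_w$ is bounded on $L^{p/r}(w)$. For $\mathcal A^{(1)}_{\mathcal S}$, which is what sparse domination of Calder\'on--Zygmund operators actually produces, the local term follows from Theorem~\ref{thm:sparseLp}. In both cases the averages over large cubes are dominated by $\langle |f|^p\rangle_{Q,w}^{1/p}$ (by Jensen, resp. by H\"older and $A_p$, $\langle |f|\rangle_Q\lesssim[w]_{A_p}^{1/p}\langle|f|^p\rangle_{Q,w}^{1/p}$), so your chain argument applies unchanged.
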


\begin{proof}
By definition,
\[
  \|\mathcal{A}_{\mathcal{S}} f\|_{L^{p,\kappa}(w)}^p
  = \sup_{B} \frac{1}{w(B)^\kappa}
  \int_B |\mathcal{A}_{\mathcal{S}} f(x)|^p w(x)\,dx,
\]
where the supremum runs over all balls $B\subset\mathbb{R}^n$. Fix one such ball $B_0$ and set
\[
  I := \frac{1}{w(B_0)^\kappa} \int_{B_0} |\mathcal{A}_{\mathcal{S}} f(x)|^p w(x)\,dx.
\]
We aim to prove $I \lesssim \|f\|_{L^{p,\kappa}(w)}^p$ with a constant independent of $B_0$ and $f$.

Decompose $f$ into a local and a global part relative to $B_0$:
\[
  f = f_1 + f_2, \quad
  f_1 := f\mathbf{1}_{2B_0}, \quad
  f_2 := f\mathbf{1}_{(2B_0)^c},
\]
where $2B_0$ is the concentric ball with twice the radius. Then
\[
  \mathcal{A}_{\mathcal{S}} f
  = \mathcal{A}_{\mathcal{S}} f_1 + \mathcal{A}_{\mathcal{S}} f_2
\]
and, using $(a+b)^p \le 2^{p-1}(a^p + b^p)$,
\[
  I \lesssim I_1 + I_2,
\]
where
\[
  I_1 := \frac{1}{w(B_0)^\kappa} \int_{B_0} |\mathcal{A}_{\mathcal{S}} f_1(x)|^p w(x)\,dx,\quad
  I_2 := \frac{1}{w(B_0)^\kappa} \int_{B_0} |\mathcal{A}_{\mathcal{S}} f_2(x)|^p w(x)\,dx.
\]

\medskip\noindent
We will first focus on estimate on $I_1$.
The operator $\mathcal{A}_{\mathcal{S}}$ is bounded on $L^p(w)$ for $w\in A_p$, with norm depending on $[w]_{A_p}$; this follows from Theorem~\ref{thm:sparseLp} applied to the weighted averages (see, e.g.,~\cite{LernerA2,LaceySparse,LernerOmbrosi,Barron}). Thus
\[
  \int_{\mathbb{R}^n} |\mathcal{A}_{\mathcal{S}} f_1(x)|^p w(x)\,dx
  \lesssim \int_{\mathbb{R}^n} |f_1(x)|^p w(x)\,dx
  = \int_{2B_0} |f(x)|^p w(x)\,dx.
\]
Restricting integration to $B_0$,
\[
  \int_{B_0} |\mathcal{A}_{\mathcal{S}} f_1(x)|^p w(x)\,dx
  \le \int_{\mathbb{R}^n} |\mathcal{A}_{\mathcal{S}} f_1(x)|^p w(x)\,dx
  \lesssim \int_{2B_0} |f(x)|^p w(x)\,dx.
\]
Therefore
\[
  I_1 \lesssim \frac{1}{w(B_0)^\kappa} \int_{2B_0} |f(x)|^p w(x)\,dx.
\]
By the definition of the Morrey norm,
\[
  \int_{2B_0} |f(x)|^p w(x)\,dx
  \le \|f\|_{L^{p,\kappa}(w)}^p\, w(2B_0)^\kappa.
\]
Hence
\[
  I_1 \lesssim \|f\|_{L^{p,\kappa}(w)}^p
  \frac{w(2B_0)^\kappa}{w(B_0)^\kappa}.
\]
Since $w\in A_p\subset A_\infty$, $w$ is doubling, so $w(2B_0) \le C\,w(B_0)$. Thus
\[
  \frac{w(2B_0)^\kappa}{w(B_0)^\kappa} \le C^\kappa,
\]
and we conclude
\[
  I_1 \lesssim \|f\|_{L^{p,\kappa}(w)}^p.
\]

\medskip\noindent
\emph{Step 2: Global part $I_2$.}
We next estimate
\[
  I_2 = \frac{1}{w(B_0)^\kappa} \int_{B_0} |\mathcal{A}_{\mathcal{S}} f_2(x)|^p w(x)\,dx.
\]
For $x\in B_0$, only cubes $Q\in\mathcal{S}$ with $x\in Q$ and $Q$ intersecting $(2B_0)^c$ contribute to $\mathcal{A}_{\mathcal{S}} f_2(x)$. Such cubes must contain $B_0$ and have side length larger than that of $B_0$. For $k\ge1$, let
\[
  \mathcal{S}_k := \{ Q\in\mathcal{S} : Q\supset B_0,\ \ell(Q)\approx 2^k \ell(B_0)\}.
\]
For each $x\in B_0$, we then have
\[
  \mathcal{A}_{\mathcal{S}} f_2(x)
  \le \sum_{k=1}^\infty
  \sum_{Q\in\mathcal{S}_k}
    \bigg( \frac{1}{w(Q)}\int_Q |f_2(y)|^p w(y)\,dy \bigg)^{1/p} \mathbf{1}_Q(x).
\]
Since for fixed $k$ and $x$ there are only finitely many such cubes containing $x$ (at most one per dyadic grid), and each $Q\in\mathcal{S}_k$ is contained in a fixed multiple of $2^{k+1}B_0$, we may bound
\[
  \mathcal{A}_{\mathcal{S}} f_2(x)
  \lesssim \sum_{k=1}^\infty
  \bigg( \frac{1}{w(2^{k+1}B_0)}\int_{2^{k+1}B_0} |f(y)|^p w(y)\,dy \bigg)^{1/p}
  =: \sum_{k=1}^\infty a_k,
  \quad x\in B_0.
\]
This bound is independent of $x$ for $x\in B_0$, so
\[
  I_2 = \frac{1}{w(B_0)^\kappa}
  \int_{B_0} \Big(\sum_{k=1}^\infty a_k\Big)^p w(x)\,dx
  \lesssim \frac{w(B_0)}{w(B_0)^\kappa} \Big(\sum_{k=1}^\infty a_k\Big)^p.
\]
Using $(\sum_k a_k)^p \lesssim \sum_k a_k^p$ for non-negative $a_k$,
\[
  I_2 \lesssim \frac{w(B_0)}{w(B_0)^\kappa} \sum_{k=1}^\infty a_k^p
  = \frac{w(B_0)}{w(B_0)^\kappa} \sum_{k=1}^\infty
     \frac{1}{w(2^{k+1}B_0)}\int_{2^{k+1}B_0} |f(y)|^p w(y)\,dy.
\]
By the Morrey norm,
\[
  \int_{2^{k+1}B_0} |f(y)|^p w(y)\,dy
  \le \|f\|_{L^{p,\kappa}(w)}^p\, w(2^{k+1}B_0)^\kappa,
\]
so
\[
  a_k^p
  \le \|f\|_{L^{p,\kappa}(w)}^p\, w(2^{k+1}B_0)^{\kappa-1}.
\]
Thus
\[
  I_2 \lesssim \frac{w(B_0)}{w(B_0)^\kappa}
  \|f\|_{L^{p,\kappa}(w)}^p
  \sum_{k=1}^\infty w(2^{k+1}B_0)^{\kappa-1}.
\]

Now we use the doubling properties of $w$. Since $w\in A_p\subset A_\infty$, there exists $\delta>0$ such that
\[
  w(2^{k+1}B_0) \le C 2^{k n \delta} w(B_0)
  \quad\text{for all }k\ge1
\]
Hence
\[
  w(2^{k+1}B_0)^{\kappa-1}
  \le C^{\kappa-1} 2^{k n \delta (\kappa-1)} w(B_0)^{\kappa-1}.
\]
Since $\kappa<1$, the exponent $\kappa-1<0$, and thus
\[
  \sum_{k=1}^\infty w(2^{k+1}B_0)^{\kappa-1}
  \lesssim w(B_0)^{\kappa-1}.
\]
It follows that
\[
  I_2 \lesssim \frac{w(B_0)}{w(B_0)^\kappa}
  \|f\|_{L^{p,\kappa}(w)}^p\, w(B_0)^{\kappa-1}
  = \|f\|_{L^{p,\kappa}(w)}^p.
\]

\medskip\noindent
\emph{Step 3: Conclusion.}
Combining the estimates for $I_1$ and $I_2$,
\[
  I \lesssim \|f\|_{L^{p,\kappa}(w)}^p,
\]
with a constant independent of $B_0$ and $f$. Taking the supremum over $B_0$ yields
\[
  \|\mathcal{A}_{\mathcal{S}} f\|_{L^{p,\kappa}(w)}^p
  \lesssim \|f\|_{L^{p,\kappa}(w)}^p,
\]
which is equivalent to the desired estimate.
\end{proof}

\begin{remark}
The restriction $\kappa<1$ is essential in the argument, since it guarantees the geometric decay of $w(2^{k+1}B_0)^{\kappa-1}$ as $k\to\infty$. The case $\kappa=1$ corresponds to limiting spaces related to BMO and requires different techniques.
\end{remark}

\subsection{Boundeness of Calder\'on--Zygmund operators on weighetd Morrey spaces}Combining this pointwise domination \eqref{eq:sparse_dom} with Theorem~\ref{thm:sparseMorrey}, we can immediately obatin the boundedness Calderon-Zygmund operator $T$ on weighted Morrey spaces.

\begin{coro}
Let $1<p<\infty$, $0\le\kappa<1$, and $w\in A_p$. If $T$ admits a sparse domination of the form
\[
 |Tf(x)| \lesssim \sum_{j=1}^N \mathcal{A}_{\mathcal{S}_j} f(x),
\]
then $T$ is bounded on $L^{p,\kappa}(w)$:
\[
 \|Tf\|_{L^{p,\kappa}(w)}
 \lesssim \|f\|_{L^{p,\kappa}(w)}.
\]
\end{coro}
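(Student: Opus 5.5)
The corollary should follow by composing the pointwise sparse domination with Theorem~\ref{thm:sparseMorrey}. The only real work is checking that the Morrey norm is a lattice quasi-norm compatible with finite sums.

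\textbf{Step 1 (monotonicity).} The quantity $\|\cdot\|_{L^{p,\kappa}(w)}$ is defined as a supremum over balls of weighted $L^p$ integrals of $|g|$. Hence if $|g|\le |h|$ almost everywhere, then $\|g\|_{L^{p,\kappa}(w)}\le \|h\|_{L^{p,\kappa}(w)}$. Applying this with $g=Tf$ and $h=C\sum_j \mathcal{A}_{\mathcal{S}_j}f$ gives
\[
\|Tf\|_{L^{p,\kappa}(w)}\lesssim \Big\|\sum_{j=1}^N \mathcal{A}_{\mathcal{S}_j}f\Big\|_{L^{p,\kappa}(w)}.
\]

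\textbf{Step 2 (triangle inequality).} For each fixed ball $B$, Minkowski's inequality in $L^p(w\mathbf{1}_B)$ shows that $\|\cdot\|_{L^{p,\kappa}(w)}$ is subadditive, since a supremum of seminorms is a seminorm. Therefore the right-hand side above is at most $\sum_{j=1}^N\|\mathcal{A}_{\mathcal{S}_j}f\|_{L^{p,\kappa}(w)}$.

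\textbf{Step 3 (apply the theorem).} Theorem~\ref{thm:sparseMorrey} applies to each sparse family $\mathcal{S}_j$, giving a bound $C\|f\|_{L^{p,\kappa}(w)}$ for each term. The constant $C$ depends only on $n,p,\kappa,[w]_{A_p}$ and not on the particular family. Since $N$ is fixed, with $N$ depending only on the dimension as in \eqref{eq:sparse_dom}, the corollary follows.

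\textbf{Main obstacle.} The main issue is the domain of validity. The sparse domination \eqref{eq:sparse_dom} is stated only for compactly supported $f$, and the sparse families may depend on $f$. General elements of $L^{p,\kappa}(w)$, however, need not be compactly supported or lie in $L^p(w)$. I would handle this in two parts:
\begin{itemize}
\item The dependence on $f$ is harmless, because the constant in Theorem~\ref{thm:sparseMorrey} is uniform over sparse families.
\item For general $f$, I would first prove the estimate for the truncations $f\mathbf{1}_{B(0,R)}$, whose Morrey norms are at most $\|f\|_{L^{p,\kappa}(w)}$.
\end{itemize}
One then has to interpret $Tf$ on $L^{p,\kappa}(w)$ as the limit of $T(f\mathbf{1}_{B(0,R)})$. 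This limit exists locally: on a fixed ball $B_0$, split $f$ as in the proof of Theorem~\ref{thm:sparseMorrey}. The part $f\mathbf{1}_{2B_0}$ lies in $L^p(w)$. The far part is handled by a convergent kernel integral, whose tail is controlled by the same geometric decay of $w(2^{k}B_0)^{\kappa-1}$. Finally, pass to the limit $R\to\infty$ using Fatou's lemma on each ball before taking the supremum.
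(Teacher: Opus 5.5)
Your Steps 1--3 reproduce the paper's proof almost verbatim. The only difference is that the paper applies $(\sum_j a_j)^p\le N^{p-1}\sum_j a_j^p$ on each ball instead of Minkowski. So, as a reduction to Theorem~\ref{thm:sparseMorrey}, nothing is missing relative to the paper. The genuine gap is the ingredient you take for granted in Step 3: Theorem~\ref{thm:sparseMorrey} is false as stated, and so is the corollary. The operator \eqref{eq:weighted_sparse_def} averages $|f|^p$, which is the endpoint exponent, and $p$-average sparse operators are not bounded on $L^p$.

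Here is a counterexample. Take $w\equiv1$ and $\kappa=0$, so $L^{p,\kappa}(w)=L^p$. Let $\mathcal{S}=\{Q_k=[0,2^{-k})^n:k\ge0\}$, which is sparse with $E_{Q_k}=Q_k\setminus Q_{k+1}$, and let $f=\mathbf{1}_{Q_N}$. For $x\in Q_j\setminus Q_{j+1}$ with $0\le j<N$, the single term $Q_j$ gives $\mathcal{A}_{\mathcal{S}}f(x)\ge(|Q_N|/|Q_j|)^{1/p}$. Hence each of these $N$ annuli contributes at least $(1-2^{-n})|Q_N|$ to $\int(\mathcal{A}_{\mathcal{S}}f)^p$, and $\|\mathcal{A}_{\mathcal{S}}f\|_{L^p}^p\ge(1-2^{-n})N\|f\|_{L^p}^p$.

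Since $T:=\mathcal{A}_{\mathcal{S}}$ trivially satisfies the corollary's hypothesis, the corollary fails. In particular your argument cannot be valid even when $T$ is Calder\'on--Zygmund, where the conclusion happens to be true by Komori--Shirai. The failure is not confined to $\kappa=0$. Use a Cantor-type family in which each selected cube keeps $\lceil 2^{Mn\kappa}\rceil$ of its dyadic descendants of side $2^{-M}\ell(Q)$, for $J$ generations. Let $|f|^p$ be the normalized density on the last generation. Then $\|f\|_{L^{p,\kappa}}\lesssim1$ uniformly in $J$, while $\int_{[0,1)^n}(\mathcal{A}_{\mathcal{S}}f)^p\ge\sum_{Q\in\mathcal{S}}\int_Q|f|^p=J+1$. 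So the theorem fails for every $0<\kappa<1$ as well.

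The precise false step in the paper is the claim, used for $I_1$, that \eqref{eq:weighted_sparse_def} is bounded on $L^p(w)$ by Theorem~\ref{thm:sparseLp}. That theorem concerns $1$-averages, and $r$-average sparse operators are bounded on $L^q$ only for $q>r$. What is true, and what your argument should rest on, is the version with averages of exponent $r<p$, e.g.\ $\mathcal{A}^{(1)}_{\mathcal{S}}$. This is exactly what \eqref{eq:sparse_dom} provides for Calder\'on--Zygmund operators. With that hypothesis your Steps 1--3 go through, once the theorem is proved correctly:
\begin{itemize}
\item The local part follows from Theorem~\ref{thm:sparseLp} and doubling.
\item In the global part, the cubes containing $x\in B_0$ and meeting $(2B_0)^c$ occur at most once per dyadic scale $\ell(Q)\gtrsim r_{B_0}$. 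They need not contain $B_0$, but each lies in a concentric ball $2^kB_0$ of comparable size.
\item H\"older and $A_p$ then give $\langle|f|\rangle_Q\lesssim w(2^kB_0)^{(\kappa-1)/p}\|f\|_{L^{p,\kappa}(w)}$.
\item The series is summed using the \emph{lower} bound $w(2^kB_0)\ge c\,2^{kn\delta}w(B_0)$ coming from $A_\infty$. The paper's upper doubling bound points the wrong way once raised to the negative power $\kappa-1$.
\end{itemize}
Your remarks on truncation and kernel tails address a real point that the paper skips. However, they rely on Calder\'on--Zygmund kernel bounds, which are not among the corollary's hypotheses.
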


\begin{proof}
For each ball $B_0$,
\[
 \int_{B_0} |Tf(x)|^p w(x)\,dx 
 \lesssim 
 \sum_{j=1}^N \int_{B_0} |\mathcal{A}_{\mathcal{S}_j}f(x)|^p w(x)\,dx.
\]
Divide by $w(B_0)^\kappa$, take the supremum in $B_0$, and apply Theorem~\ref{thm:sparseMorrey} to each $\mathcal{S}_j$.
\end{proof}

This shows how sparse operators act as a bridge between Euclidean weighted $L^p$ theory and weighted Morrey theory. The same idea extends to fractional integral operators, multilinear operators, and certain non-standard singular integrals, provided suitable sparse domination results are available.

\section{Extensions and further directions}

We conclude with a few remarks on possible extensions of Theorem~\ref{thm:sparseMorrey}.

\subsection{Other Morrey-type and generalized spaces}

The Komori--Shirai spaces $L^{p,\kappa}(w)$ are only one example from a larger family of Morrey-type spaces:

\medskip

\subsubsection{Generalized Morrey spaces} These spaces extend the classical Morrey scale by replacing the factor $|B|^{-\lambda/p}$ (or $w(B)^{-\kappa}$ in the weighted setting) with a control function. Let $\phi : (0,\infty) \to (0,\infty)$ be a positive, increasing function. Following the framework introduced by Mizuhara~\cite{Mizuhara1991} and later developed systematically by Nakai~\cite{Nakai1994}, the generalized Morrey space $L^{p,\phi}(\mathbb{R}^{n})$ consists of all locally integrable functions $f$ such that 
\[
\|f\|_{L^{p,\phi}}
:= \sup_{B(x,r)}
\frac{1}{\phi(r)}
\left( \int_{B(x,r)} |f(y)|^{p}\, dy \right)^{1/p}
< \infty,
\]
where the supremum is taken over all balls $B(x,r) \subset \mathbb{R}^{n}$. 
Note that using different choices of $\phi$ we recover a variety of Morrey-type spaces. For instance, the classical Morrey space corresponds to $\phi(r) = r^{\lambda/p - n/p}$. Note that numerous variants and weighted versions of these spaces have since been studied, see Sawano~\cite{SawanoBook} and Gunawan et al.~\cite{GunawanInclusion}.

\medskip

\subsubsection{Orlicz--Morrey spaces} These form another natural extension of the classical Morrey scale in which the local $L^{p}$ norm is replaced by an Orlicz norm. Let $\Phi$ be a Young function and let $\phi : (0,\infty)\to (0,\infty)$ be a positive control function. Following the framework introduced by Nakai~\cite{Nakai2004OM} and later refined by Sawano and collaborators, the Orlicz--Morrey space $L^{\Phi,\phi}(\mathbb{R}^{n})$ consists of all measurable functions $f$ for which
\[
\|f\|_{L^{\Phi,\phi}}
:= \sup_{B(x,r)}
\frac{1}{\phi(r)}
\|f\|_{L^{\Phi}(B(x,r))}
<\infty.
\]
Here $\|f\|_{L^{\Phi}(B)}$ denotes the Orlicz Luxemburg norm associated to $\Phi$ on the ball $B$, and the function $\phi$ controls the scaling behaviour. Note that choosing $\Phi(t)=t^{p}$ recovers the generalized Morrey space $L^{p,\phi}$. Orlicz-Morrey spaces have been studied extensively in recent years, especially in the context of boundedness of maximal operators, singular integrals, and commutators, see~\cite{Nakai2004OM, GuliyevOM, SawanoOM}.

\subsubsection{Variable exponent Morrey spaces} These spaces extend the classical Morrey framework by allowing the integrability exponent to vary from point to point. Let $p:\mathbb{R}^{n}\to [1,\infty)$ be a measurable function satisfying suitable log-HAlder continuity conditions, and let $\lambda:\mathbb{R}^{n}\to [0,n)$ be a measurable function. These spaces were introduced by Almeida and Samko~\cite{AlmeidaSamko2009} and further developed by Ho~\cite{Ho2014}, the variable exponent Morrey space $L^{p(\cdot),\lambda(\cdot)}(\mathbb{R}^{n})$ consists of all measurable functions $f$ such that
\[
\|f\|_{L^{p(\cdot),\lambda(\cdot)}}
:= \sup_{B(x,r)}
r^{-\lambda(x)/p(x)}
\| f \chi_{B(x,r)} \|_{L^{p(\cdot)}}
< \infty,
\]
where $\|\cdot\|_{L^{p(\cdot)}}$ denotes the variable exponent Lebesgue norm. When $p(\cdot)\equiv p$ and $\lambda(\cdot)\equiv \lambda$ are constant functions, this reduces to the classical Morrey space $L^{p,\lambda}(\mathbb{R}^{n})$. Variable exponent Morrey spaces have been used to study maximal operators, fractional integrals, and singular integrals in settings where spatial inhomogeneity affects both the local integrability and decay conditions, see~\cite{AlmeidaSamko2009,Ho2014,HastoDiening} for detailed treatments.

We would like to remark that in many of these settings, sparse domination is also available and the proof scheme used in Section 3 can be adapted. The key ingredients are similar incluidng a decomposition into local and global parts relative to a testing ball $B_0$;
and $L^p$-type boundedness of sparse operators in the underlying weighted space, a doubling-type property that controls $w(\lambda B_0)$ in terms of $w(B_0)$, combined with the condition that the Morrey parameter (such as $\kappa$) is strictly less than $1$ to ensure geometric decay in the far annuli to obtain the boundedness of the appropriate sparse operators on these spaces.




\subsection{Commuatator boundedness on weighted Morrey spaces} Another application of sparse domination is to show the boundedness of Calder\'on-Zygmund commutators on weighted Morrey spaces. We can achieve this using the sparse domination of CZO commutator's by the Bloom sparse operators. Let us recall that given a Calder\'on--Zygmund operator $T$ and $b\in \mathrm{BMO}$, one has the pointwise bound
\begin{equation}\label{eq:sparse-commutator}
|[b,T]f(x)|
\;\lesssim\;
\sum_{j=1}^{N} \mathcal{T}_{\mathcal{S}_{j}}f(x) +\mathcal{T}^{*}_{\mathcal{S}_{j}} f(x)
\end{equation}
where each $\mathcal{S}_{j}$ is a sparse family of cubes and
\[
\mathcal{T}_{\mathcal{S}}f(x)
=
\sum_{Q\in\mathcal{S}}
|b(x)-b_{Q}|
\left( \frac{1}{|Q|}\int_{Q} |f(y)| \, dy \right)
\mathbf{1}_{Q}(x),
\]
\[
\mathcal{T}^{*}_{\mathcal{S}}f(x)
=
\sum_{Q\in\mathcal{S}}
\left( \frac{1}{|Q|}\int_{Q} |b(y)-b_{Q}|\,|f(y)| \, dy \right)
\mathbf{1}_{Q}(x).
\]
The operator $\mathcal{T}_{\mathcal{S}}$ is the ``Bloom sparse operator,'' 
and $\mathcal{T}^{*}_{\mathcal{S}}$ is its adjoint. The domination \eqref{eq:sparse-commutator} was established independently by 
Lerner--Ombrosi--Rivera-R\'ios~\cite{LernerOmbrosiRivera} and 
Holmes--Lacey--Wick~\cite{HolmesLaceyWick}, and forms the modern basis for Bloom-type 
inequalities for commutators.  
\begin{theorem}\label{thm:bloom-sparse-morrey}
Let $1<p<\infty$, $0\le \kappa<1$, and let $w\in A_{p}$. Let $b\in \mathrm{BMO}(\mathbb{R}^{n})$ and let $\mathcal{S}$ be a sparse family of cubes. Then the operator $\mathcal{T}_{\mathcal{S}}$ defined above  is bounded on the weighted Morrey space $L^{p,\kappa}(w)$; that is, there exists a constant $C>0$, depending only on $n,p,\kappa$ and $[w]_{A_{p}}$, such that
\[
\|\mathcal{T}_{\mathcal{S}} f\|_{L^{p,\kappa}(w)}
\le C\,\|b\|_{\mathrm{BMO}}\,\|f\|_{L^{p,\kappa}(w)}
\qquad\text{for all } f\in L^{p,\kappa}(w).
\]
\end{theorem}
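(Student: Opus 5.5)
Fix a ball $B_0$ and split $f=f_1+f_2$ with $f_1=f\mathbf{1}_{2B_0}$ and $f_2=f\mathbf{1}_{(2B_0)^c}$, exactly as in the proof of Theorem~\ref{thm:sparseMorrey}. The term
\[
I_1=\frac{1}{w(B_0)^\kappa}\int_{B_0}|\mathcal{T}_{\mathcal{S}}f_1|^p w
\]
is handled by the weighted $L^p$ bound for the Bloom sparse operator. In the one-weight case (Bloom with $\mu=\lambda=w$, so $\nu=1$ and $\mathrm{BMO}_\nu=\mathrm{BMO}$) one has $\|\mathcal{T}_{\mathcal{S}}g\|_{L^p(w)}\lesssim\|b\|_{\mathrm{BMO}}\|g\|_{L^p(w)}$ for $w\in A_p$, as in \cite{LernerOmbrosiRivera,HolmesLaceyWick}. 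I will assume this result. If it has to be proved, the standard route is duality plus the John--Nirenberg inequality: for each $Q$ and $g\ge0$, $\int_Q|b-b_Q|g\lesssim\|b\|_{\mathrm{BMO}}|Q|\,\|g\|_{L\log L,Q}$. This reduces $\mathcal{T}_{\mathcal{S}}$ to a sparse operator with $L\log L$ averages, which is bounded on $L^{p'}(\sigma)$ for $\sigma=w^{1-p'}\in A_{p'}$. Then $I_1\lesssim\|b\|_{\mathrm{BMO}}^p\,w(2B_0)^\kappa w(B_0)^{-\kappa}\|f\|^p_{L^{p,\kappa}(w)}$, and doubling of $w$ finishes this term.

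For $I_2$, the only cubes $Q\in\mathcal{S}$ that contribute for $x\in B_0$ are those containing $x$ and meeting $(2B_0)^c$. As in Step 2 of Theorem~\ref{thm:sparseMorrey}, organise them by $\ell(Q)\approx 2^k\ell(B_0)$, $k\ge1$, with boundedly many per scale and each $Q\subset c2^{k}B_0=:B_k$. The difference from the earlier theorem is that $|b(x)-b_Q|$ is not constant on $B_0$. I would write
\[
|b(x)-b_Q|\le|b(x)-b_{B_0}|+|b_{B_0}-b_{B_k}|+|b_{B_k}-b_Q|.
\]
The last two terms are $\lesssim k\|b\|_{\mathrm{BMO}}$ by the usual telescoping estimate, using comparable sizes for the last one. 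For the average of $f$, Hölder and $A_p$ give
\[
\frac1{|Q|}\int_Q|f|\le[w]_{A_p}^{1/p}\Big(\frac{1}{w(Q)}\int_Q|f|^pw\Big)^{1/p}.
\]
Since $Q\subset B_k$ with $|Q|\approx|B_k|$, doubling turns this into $\lesssim\big(w(B_k)^{-1}\int_{B_k}|f|^pw\big)^{1/p}\le w(B_k)^{(\kappa-1)/p}\|f\|_{L^{p,\kappa}(w)}$. Hence for $x\in B_0$,
\[
\mathcal{T}_{\mathcal{S}}f_2(x)\lesssim\|f\|_{L^{p,\kappa}(w)}\sum_{k\ge1}\big(|b(x)-b_{B_0}|+k\|b\|_{\mathrm{BMO}}\big)\,w(B_k)^{(\kappa-1)/p}.
\]

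The main obstacle is the extra growth factor $k$ and the $x$-dependent factor. The $k$ is absorbed by the reverse doubling of $A_\infty$ weights: there is $\delta>0$ with $w(B_k)\ge C2^{kn\delta}w(B_0)$. This is the direction actually needed, and it is the correct form of the inequality used in Step 2 of Theorem~\ref{thm:sparseMorrey}. It gives $\sum_k k\,w(B_k)^{(\kappa-1)/p}\lesssim w(B_0)^{(\kappa-1)/p}$, and this is where $\kappa<1$ enters. For the $x$-dependent term, take the $L^p(w)$ norm over $B_0$ and use the weighted John--Nirenberg inequality for $A_\infty$ weights, $\int_{B_0}|b-b_{B_0}|^pw\lesssim\|b\|^p_{\mathrm{BMO}}w(B_0)$. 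Altogether
\[
I_2\lesssim\|b\|^p_{\mathrm{BMO}}\,\frac{w(B_0)}{w(B_0)^{\kappa}}\,w(B_0)^{\kappa-1}\|f\|^p_{L^{p,\kappa}(w)}=\|b\|^p_{\mathrm{BMO}}\|f\|^p_{L^{p,\kappa}(w)}.
\]
Combining with $I_1$ and taking the supremum over $B_0$ gives the claim.
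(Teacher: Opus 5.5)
Your proposal is correct and follows the same scheme the paper sketches in the remark after the theorem. The local part uses the one-weight $L^p(w)$ bound for the Bloom sparse operator, and the global part combines BMO oscillation, Morrey control on the enlarged balls $B_k$, and the doubling and reverse-doubling properties of $A_p$ weights to sum a geometric series. You supply details the paper omits (the three-term splitting of $|b(x)-b_Q|$, the H\"older--$A_p$ passage from unweighted to weighted averages, the weighted John--Nirenberg inequality on $B_0$), and you correctly use the lower bound $w(B_k)\ge C\,2^{kn\delta}w(B_0)$, which is the direction needed, rather than the upper bound written in the paper's Step~2.
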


\begin{remark}
We briefly indicate how one may prove Theorem~\ref{thm:bloom-sparse-morrey}, but we do not give the full argument here. The key idea is the $L^{p}(w)$ boundedness of Bloom-type sparse operators, proved by a sparse duality argument using the weighted John--Nirenberg inequality and a Carleson embedding estimate.  To obtain the Morrey bound, one fixes a ball and decomposes $f$ into local and global parts; the local term is controlled by the $L^{p}(w)$ estimate, while sparsity ensures that the global contribution involves only cubes containing the ball.  
Hence using the BMO oscillation, Morrey control on dyadic annuli, and the doubling and reverse H\"older properties of $A_{p}$ weights, one obtains a summable geometric series, yielding the desired estimate.
\end{remark}

As an application, one can combine Theorem~\ref{thm:bloom-sparse-morrey} with known sparse domination result for commutators of Calder\'on--Zygmund operators given by \eqref{eq:sparse-commutator} to obtain the boundedness of these operators on Morrey spaces. Before stating the boundedness result, let us recall the definition for commutators on the Morrey space $L^{p,\kappa}(w)$.

\begin{definition} Let $1<p<\infty$, $0\le \kappa<1$, and $w\in A_p$. Let us denote by $X = \overline{C_c^\infty(\mathbb{R}^n)}^{\,L^{p,\kappa}(w)}$. Then for $f\in C_c^\infty(\mathbb{R}^n)$, we define commutator as
\[
[b,T]f := b\,Tf - T(bf),
\].
For a general $f\in L^{p,\kappa}(w)$, the commutator $[b,T]f$ is defined as the canonical image in $X^{**}$ of the operator initially defined on $X$. Here we use biduality to see $X^{**}=L^{p,\kappa}(w)$, which yields us a well-defined element of $L^{p,\kappa}(w)$; see \cite[Section~10.5.1]{SawanoBook} for more details.
\end{definition}

\begin{coro}\label{cor:commutator-morrey}
Let $T$ be a Calder\'on--Zygmund operator on $\mathbb{R}^{n}$ and $b\in \mathrm{BMO}(\mathbb{R}^{n})$. Assume $1<p<\infty$, $0\le \kappa<1$, and $w\in A_{p}$. Then the commutator $[b,T]$ described above extends to a bounded operator on $L^{p,\kappa}(w)$, that is,
\[
\|[b,T]f\|_{L^{p,\kappa}(w)}
\le C\,\|b\|_{\mathrm{BMO}}\,\|f\|_{L^{p,\kappa}(w)}
\qquad\text{for all } f\in L^{p,\kappa}(w),
\]
with a constant $C$ independent of $f$.
\end{coro}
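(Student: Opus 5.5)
The proof first handles smooth compactly supported functions, where the commutator is defined classically, and then passes to all of $L^{p,\kappa}(w)$ by density and duality, as in the definition above.

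\textbf{Step 1 (pointwise domination).} Fix $f\in C_c^\infty(\mathbb{R}^n)$. By the sparse domination \eqref{eq:sparse-commutator} of Lerner--Ombrosi--Rivera-R\'ios and Holmes--Lacey--Wick, there are finitely many sparse families $\mathcal{S}_1,\dots,\mathcal{S}_N$ (depending on $f$ only through the choice of cubes) such that
\[
|[b,T]f(x)|\lesssim \sum_{j=1}^N \big(\mathcal{T}_{\mathcal{S}_j}f(x)+\mathcal{T}^*_{\mathcal{S}_j}f(x)\big)
\]
for a.e.\ $x$. Raising to the power $p$ and integrating against $w$ over a fixed ball $B_0$, exactly as in the proof of the Corollary after Theorem~\ref{thm:sparseMorrey}, reduces the Morrey estimate to bounds for $\mathcal{T}_{\mathcal{S}}$ and $\mathcal{T}^*_{\mathcal{S}}$ on $L^{p,\kappa}(w)$. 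These bounds must have constants uniform in the sparse family.

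\textbf{Step 2 (the two sparse operators).} For $\mathcal{T}_{\mathcal{S}}$ we invoke Theorem~\ref{thm:bloom-sparse-morrey} directly. For the adjoint $\mathcal{T}^*_{\mathcal{S}}$ we run the same local/global scheme as in Theorem~\ref{thm:sparseMorrey}.
\begin{itemize}
\item \emph{Local part.} Split $f=f\mathbf{1}_{2B_0}+f\mathbf{1}_{(2B_0)^c}$. The local piece is handled by the $L^p(w)$ boundedness of $\mathcal{T}^*_{\mathcal{S}}$. This follows by duality from the $L^{p'}(\sigma)$ bound for $\mathcal{T}_{\mathcal{S}}$, where $\sigma=w^{1-p'}\in A_{p'}$, since $\langle\mathcal{T}^*_{\mathcal{S}}f,g\rangle=\langle f,\mathcal{T}_{\mathcal{S}}g\rangle$. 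The doubling of $w$ then gives the Morrey bound for this piece.
\item \emph{Global part.} Only cubes $Q\supset B_0$ with $\ell(Q)\approx 2^k\ell(B_0)$ contribute. Apply H\"older with exponents $p,p'$ and weights $w,\sigma$:
\[
\frac{1}{|Q|}\int_Q|b-b_Q||f|\le \frac{1}{|Q|}\Big(\int_Q|f|^pw\Big)^{1/p}\Big(\int_Q|b-b_Q|^{p'}\sigma\Big)^{1/p'}.
\]
\item \emph{Weighted John--Nirenberg.} Since $\sigma\in A_\infty$, we have $\int_Q|b-b_Q|^{p'}\sigma\lesssim\|b\|_{\mathrm{BMO}}^{p'}\sigma(Q)$.
\item \emph{Summing.} By the $A_p$ condition, $\sigma(Q)^{1/p'}\lesssim |Q|\,w(Q)^{-1/p}$. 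Hence each cube contributes at most $\|b\|_{\mathrm{BMO}}\|f\|_{L^{p,\kappa}(w)}\,w(2^{k+1}B_0)^{(\kappa-1)/p}$. The reverse doubling of $A_\infty$ weights, $w(2^kB_0)\gtrsim 2^{kn\delta}w(B_0)$, makes the sum over $k$ geometric because $\kappa<1$. Multiplying by $w(B_0)^{1-\kappa}$ closes the estimate.
\end{itemize}

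\textbf{Step 3 (extension to all of $L^{p,\kappa}(w)$).} Steps 1--2 give the inequality on $C_c^\infty$, hence $[b,T]$ extends boundedly to the closure $X$. Taking the bidual adjoint and using $X^{**}=L^{p,\kappa}(w)$ (\cite[Section~10.5.1]{SawanoBook}) defines $[b,T]$ on all of $L^{p,\kappa}(w)$. The norm is preserved because $\|\cdot\|_{X^{**}}\approx\|\cdot\|_{L^{p,\kappa}(w)}$.

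The main obstacle is this last step. The bidual identification must carry equivalent norms, and the extended operator should agree with any pointwise definition. If this cannot be justified directly, I would instead argue via Fatou: truncate $f$ to balls and use the uniform sparse bounds, which do not depend on the support of $f$.
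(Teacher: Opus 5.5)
Your proposal is correct and follows the paper's route: the sparse domination \eqref{eq:sparse-commutator}, Morrey bounds for the sparse operators that are uniform in the sparse family (via Theorem~\ref{thm:bloom-sparse-morrey}), and the biduality definition to pass from $C_c^\infty$ to all of $L^{p,\kappa}(w)$. You actually go a bit further than the paper, which only invokes Theorem~\ref{thm:bloom-sparse-morrey} for $\mathcal{T}_{\mathcal{S}}$ and never treats the adjoint $\mathcal{T}^*_{\mathcal{S}}$ that also appears in \eqref{eq:sparse-commutator}; your local part (duality with the $L^{p'}(\sigma)$ bound for $\mathcal{T}_{\mathcal{S}}$, $\sigma=w^{1-p'}\in A_{p'}$) and your global part (H\"older, weighted John--Nirenberg, the $A_p$ condition and reverse doubling) correctly supply that missing piece.
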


In summary, sparse domination provides a elegant route to Morrey-type inequalities. The explicit boundedness of sparse operators on weighted Morrey spaces, as established here, serves as a model result and a convenient building block for more sophisticated applications. In particular, it offers a unified framework that seamlessly extends to commutators, multilinear operators, and potential two-weight Bloom settings. We expect this approach to be adaptable to broader Morrey-type scales, including generalized, 
Orlicz-Morrey, and variable-exponent Morrey spaces as described earlier.



\textbf{Acknowledgement.} The author is grateful to the referee for helpful comments.

\bibliographystyle{amsalpha}

\begin{thebibliography}{A}



\bibitem{Adams}
D. R. ~Adams, \emph{A note on Riesz potentials}, Duke Math. J. {\bf 42} (1975), 765--778.


\bibitem{AlmeidaSamko2009}
A.~Almeida and S.~Samko,
\emph{Morrey spaces with variable exponent},
J. Funct. Spaces Appl. 7 (2009), no.~2, 143-167.

\bibitem{BaileyEtAl}
J.~Bailey, A.~B\'en\'eteau, F.~Bernicot, et al.,
\emph{Quadratic sparse domination and weighted estimates for operators with non-smooth kernels},
J. Geom. Anal. \textbf{33} (2023), no.~4, Paper 128.%

\bibitem{Barron}
A.~Barron,
\emph{Sparse domination and the strong maximal function},
Adv. Math. \textbf{350} (2019), 324-350.%

\bibitem{BernicotBernicot}
F.~Bernicot, D.~Cruz-Uribe, and J.~Martell,
\emph{Multilinear Calder\'on--Zygmund theory, dyadic cubes, and sparse domination},
Indiana Univ.\ Math.\ J.\ \textbf{67} (2018), no.~5, 1747-1784.

\bibitem{BernicotFreyPetermichl}
F.~Bernicot, D.~Frey, and S.~Petermichl,
\emph{Sharp weighted estimates for the conical square function},
Anal.\ PDE \textbf{9} (2016), no.~5, 1079-1113.

\bibitem{ChenDingWang}
Y.~Chen, Y.~Ding, X.~Wang,
\emph{Compactness of commutators for singular integrals on Morrey spaces},
Canad. J. Math. \textbf{63} (2011), no.~5, 1087-1109.%

\bibitem{CondeAlonsoRey}
J.~M.~Conde-Alonso and G.~Rey,
\emph{A pointwise estimate for positive dyadic shifts and some applications},
Math.\ Ann.\ \textbf{365} (2016), no.~3-4, 1111-135.

\bibitem{FM}
F. ~Chiarenza and M. ~Frasca, \emph{Morrey spaces and Hardy-Littlewood maximal function},
Rend. Mat. Appl. \textbf{7}(1987), 273-279.


\bibitem{jm}
J. ~Duoandikoetxea, M. ~Rosenthal,
\emph{Boundedness properties in a family of weighted Morrey spaces with emphasis on power weights},
Journ. Func. Anal., {\bf 8} (2020), 108687.

\bibitem{rjem}
R. ~Gong, J. ~Li, E. ~Pozzi, M. N. ~Vempati,
\emph{Commutators on weighted Morrey spaces on spaces of homogeneous type},
Anal. Geom. Metr. Spac. {\bf 8} (2021), 305-334.

\bibitem{gvqx}
R. ~Gong, M. ~N. ~Vempati, Q. ~Wu, P. ~Xie,
\emph{Boundedness and compactness of Cauchy-type integral commutator on weighted Morrey spaces},
Journ.  Aust. Math. Soc., {\bf 113} (1), 36-56, (2022).

\bibitem{GuliyevOM}
V.~S.~Guliyev,
\emph{Boundedness of the maximal, potential and singular operators in the generalized Orlicz--Morrey spaces},
J. Inequal. Appl. 2009, Article ID 503948, 20~pp.

\bibitem{GunawanBessel}
H.~Gunawan, E.~N.~D.~Eridani,
\emph{The boundedness of Bessel--Riesz operators on generalized Morrey spaces},
Austral. J. Math. Anal. Appl. \textbf{13} (2016), no.~1, Art.~9, 14~pp.%

\bibitem{GunawanInclusion}
H.~Gunawan, et al.,
\emph{Inclusion properties of generalized Morrey spaces},
Math. Nachr. \textbf{290} (2017), no.~2--3, 332-340.%

\bibitem{HastoDiening}
P.~H\"ast\"o and L.~Diening,
\emph{Variable exponent function spaces},
in: ``Lectures on Real Analysis'', Springer Lecture Notes.

\bibitem{HolmesLaceyWick}
I.~Holmes, M.~T.~Lacey, and B.~D.~Wick,
\emph{Commutators in the two-weight setting},
J.\ Geom.\ Anal.\ \textbf{28} (2018), no.~4, 3248-3278.

\bibitem{Ho2014}
K.-P.~Ho,
\emph{Some properties of variable exponent Morrey spaces},
Math. Inequal. Appl. 17 (2014), no.~1, 279-289.

\bibitem{HytTalOscillatory}
T.~HytAnen and J.~M.~Martikainen,
\emph{Non-homogeneous $Tb$ theorem and random dyadic cubes on metric spaces},
J.\ Geom.\ Anal.\ \textbf{22} (2012), 1071-1107.


\bibitem{KomoriShirai}
Y.~Komori, S.~Shirai,
\emph{Weighted Morrey spaces and a singular integral operator},
Math. Nachr. \textbf{282} (2009), no.~2, 219-231.%

\bibitem{LaceySparse}
M.~T.~Lacey,
\emph{Sparse bounds for oscillatory and random singular integrals},
New York J. Math. \textbf{23} (2017), 1190-1223.%


\bibitem{LernerLocal}
A.~K.~Lerner,
\emph{A pointwise estimate for the local sharp maximal function with applications to singular integrals},
Bull. Lond. Math. Soc. \textbf{42} (2010), no.~5, 843-856.%


\bibitem{LernerA2}
A.~K.~Lerner,
\emph{A simple proof of the $A_2$ conjecture},
Int. Math. Res. Not. IMRN 2013, no.~14, 3159-3170.%

\bibitem{LernerOF}
A.~K.~Lerner,
\emph{Operator-free sparse domination},
preprint and related expositions, Helda institutional repository, 2022.%

\bibitem{akl}
A. ~K. ~Lerner, \emph{A note on the maximal operator on weighted Morrey
spaces}, Anal. Math., {\bf 49} (2023), 1073-1086.



\bibitem{LernerVariation}
A.~K.~Lerner and F.~Nazarov,
\emph{Intuitive sparse bounds for variational Carleson operators},
in: Harmonic Analysis, Partial Differential Equations, and Applications,
Springer (2017), 163-178.


\bibitem{LernerOmbrosi}
A.~K.~Lerner, S.~Ombrosi,
\emph{Some remarks on the pointwise sparse domination},
J. Geom. Anal. \textbf{30} (2020), no.~1, 101-134.%

\bibitem{LernerOmbrosiRivera}
A.~K.~Lerner, S.~Ombrosi, and I.~P.~Rivera-R\'ios,
\emph{On pointwise and weighted estimates for commutators of multilinear singular integral operators},
Ann.\ Sc.\ Norm.\ Super.\ Pisa Cl.\ Sci.\ (5) \textbf{17} (2017), no.~2, 659-678.

\bibitem{mnv}
M. N. Vempati,
\emph{Commutators of maximal Operators on weighted Morrey spaces,}
Int. Trans. Spec. Func., (2025) 1-18.

\bibitem{Morrey}
C.~B.~Morrey,
\emph{On the solutions of quasi-linear elliptic partial differential equations},
Trans. Amer. Math. Soc. \textbf{43} (1938), 126-166.%

\bibitem{Mizuhara1991}
T.~Mizuhara,
\emph{Boundedness of some classical operators on generalized Morrey spaces},
in: Harmonic Analysis (Sendai, 1990),
Lecture Notes in Mathematics, vol.~1494,
Springer, Berlin, 1991, 183-189.

\bibitem{NakamuraSawano}
S.~Nakamura, Y.~Sawano, H.~Tanaka
\emph{Weighted local Morrey spaces},
Ann. Acad. Sci. Fenn. Math. \textbf{45} (2020), no.~1, 67-92.%

\bibitem{NakamuraSawanoTanaka2018}
S.~Nakamura, Y.~Sawano, H.~Tanaka,
\emph{The fractional operators on weighted Morrey spaces}, J. Geom. Anal., \textbf{28} (2018), 1502-1524.

\bibitem{Nakai1994}
E.~Nakai,
\emph{Hardy--Littlewood maximal operator, singular integral operators and Riesz potentials on generalized Morrey spaces},
Math.\ Nachr.\ 166 (1994), 95-103.

\bibitem{Nakai2004OM}
E.~Nakai,
\emph{Orlicz--Morrey spaces and the Hardy--Littlewood maximal function},
Studia Math. 167 (2005), no.~3, 295-314.

\bibitem{Olsen}
P. ~A. ~Olsen,
\emph{Fractional integration, Morrey spaces and a Schr\"odinger equation},
Comm. Part. Diff. Equa. \textbf{20} (1995), 2005-2055.

\bibitem{Ramadana}
Y.~Ramadana, et al.,
\emph{Two-weighted estimates for some sublinear operators on generalized weighted Morrey spaces},
preprint, arXiv:2406.05435, 2024.%


\bibitem{SawanoBook}
Y.~Sawano, G.~ Di Fazio, D. I. ~Hakim
\emph{Morrey Spaces--Introduction
and applications to integral operators and PDE's}, Vol. II, Monogr.
Res. Notes Math., CRC Press, Boca Raton, FL, (2020), xviii+409.

\bibitem{SawanoNote}
Y.~Sawano,
\emph{A thought on generalized Morrey spaces},
J. Indian Math. Soc. (N.S.) \textbf{86} (2019), no.~1--2, 1-26.%

\bibitem{SawanoOM}
Y.~Sawano,
\emph{Orlicz--Morrey spaces and related operators},
in: ``Function Spaces and Inequalities'', Springer Proc. Math. Stat. 2014, pp.~169-186.

\bibitem{Tanaka}
H.~Tanaka,
\emph{Morrey spaces and fractional operators},
J. Aust. Math. Soc. \textbf{86} (2009), 417-434.%

\bibitem{TanWangXue}
J.~Tan, J.~Wang, Q.~Xue,
\emph{Boundedness of a class of multilinear operators and their iterated commutators on Morrey--Banach function spaces},
preprint, arXiv:2502.08456, 2025.%

\bibitem{vsg}
V. S. Guliyev,
\emph{Generalized weighted Morrey spaces and higher order commutators of sublinear operators},
Eur. Math. Journ. {\bf 3} (2012), 33-61.

\bibitem{WangZheng}
G.~Wang, S.~Zheng,
\emph{Boundedness on generalized Morrey spaces for the Schr\"odinger operator with potential in a reverse H\"older class},
Electron. J. Differential Equations 2023, No.~67, 1-14.%

\end{thebibliography}

\end{document}